\theoremstyle{plain}
\newtheorem{thm}{Theorem}[section]
\newtheorem{lem}[thm]{Lemma}
\newtheorem{pro}[thm]{Proposition}
\theoremstyle{definition}
\newtheorem{defn}[thm]{Definition}
\theoremstyle{remark}
\newtheorem{rem}[thm]{Remark}
\newtheorem{example}[thm]{Example}
\newcommand{\Gtwo}{\ifmmode{{\rm G}_2}\else{${\rm G}_2$}\fi}
\date{\today}
\begin{document}

\title[]
 {Notes on a class of paracontact metric 3-manifolds}

\author[S. Zamkovoy]{Simeon Zamkovoy}

\address{
University of Sofia "St. Kl. Ohridski"\\
Faculty of Mathematics and Informatics\\
Blvd. James Bourchier 5\\
1164 Sofia, Bulgaria}
\email{zamkovoy@fmi.uni-sofia.bg}

%\author[G. Nakova]{Galia Nakova}

%\address{
%University of Veliko Tarnovo "St. Cyril and St. Methodius" \\ Faculty of Mathematics and Informatics\\   Department of Algebra and Geometry
%\\ 2 Teodosii Tarnovski Str. \\ Veliko Tarnovo 5003\\ Bulgaria}
%\email{gnakova@gmail.com}

\subjclass{}

\keywords{3-dimensional paracontact metric manifolds}

%\date{January 1, 2004}
%----------additions
%\dedicatory{To my boss}
%%% ----------------------------------------------------------------------

\begin{abstract}
We study a class of 3-dimensional paracontact metric manifolds and we revise some of the results obtain in \cite{SS}.
\end{abstract}

\newcommand{\g}{\mathfrak{g}}
\newcommand{\s}{\mathfrak{S}}
\newcommand{\D}{\mathcal{D}}
\newcommand{\F}{\mathcal{F}}
\newcommand{\R}{\mathbb{R}}
\newcommand{\K}{\mathbb{K}}
\newcommand{\U}{\mathbb{U}}
\newcommand{\diag}{\mathrm{diag}}
\newcommand{\End}{\mathrm{End}}
\newcommand{\im}{\mathrm{Im}}
\newcommand{\id}{\mathrm{id}}
\newcommand{\Hom}{\mathrm{Hom}}

\newcommand{\Rad}{\mathrm{Rad}}
\newcommand{\rank}{\mathrm{rank}}
\newcommand{\const}{\mathrm{const}}
\newcommand{\tr}{{\rm tr}}
\newcommand{\ltr}{\mathrm{ltr}}
\newcommand{\codim}{\mathrm{codim}}
\newcommand{\Ker}{\mathrm{Ker}}

\newcommand{\thmref}[1]{Theorem~\ref{#1}}
\newcommand{\propref}[1]{Proposition~\ref{#1}}
\newcommand{\corref}[1]{Corollary~\ref{#1}}
\newcommand{\secref}[1]{\S\ref{#1}}
\newcommand{\lemref}[1]{Lemma~\ref{#1}}
\newcommand{\dfnref}[1]{Definition~\ref{#1}}
%\newcommand{\eqref}[1]{(\ref{#1})}

%\frenchspacing

\newcommand{\ee}{\end{equation}}
\newcommand{\be}[1]{\begin{equation}\label{#1}}

%%% ----------------------------------------------------------------------
\maketitle

\section{Introduction}\label{sec-1}
In these paper we consider the properties of the torsion tensor $\tau$ introduced in Section 3, and more specifically the tensor field $\nabla_{\xi}\tau$ which has an important role for the extremal metrics compact contact metric manifolds (see \cite{P}). We obtain various results which are analogous to ones for contact metric manifolds, but are somehow weaker because the metric tensor $g$ is indefinite. In Section 4, we consider a class of paracontact metric 3-manifolds for which the Ricci tensor $Q$ and the tensor field $\varphi$ commute ($Q\varphi=\varphi Q$). We revise the results obtain in \cite{SS} --- more specifically Theorem 3.1, which is both incorrectly formulated and proved, and Theorem 3.3, which is correctly formulated, but incorrectly proved. We given example illustrating our results. Besides, we find an invariant of the $\mathbb{D}$-homothetic transformation, as well as a necessary and sufficient condition for a 3-manifold to be $\emph{locally $\varphi-$}$ $\emph{symmetric}$.
\section{Preliminaries}\label{sec-2}
A (2n+1)-dimensional smooth manifold $M^{(2n+1)}$
has an \emph{almost paracontact structure}
$(\varphi,\xi,\eta)$
if it admits a tensor field
$\varphi$ of type $(1,1)$, a vector field $\xi$ and a 1-form
$\eta$ satisfying the  following compatibility conditions % are satisfied:
\begin{eqnarray}
  \label{f82}
    & &
    \begin{array}{cl}
     (i)   & \varphi(\xi)=0,\quad \eta \circ \varphi=0,\quad
     \\[5pt]
     (ii)  & \eta (\xi)=1 \quad \varphi^2 = id - \eta \otimes \xi,
     \\[5pt]
     (iii) & \textrm{distribution $\mathbb {D}: p \in M \longrightarrow \mathbb {D}_p\subset T_pM:$}
     \\[1pt]
     & \textrm{$\mathbb D_p=Ker \eta=\{X\in T_pM: \eta (X)=0\}$ is called {\it paracontact}}
     \\[1pt]
     & \textrm{{\it distribution} generated by $\eta$.}
    \end{array}
\end{eqnarray}

The tensor field $\varphi $ induces an almost paracomplex structure \cite{KW} on each
fibre on $\mathbb D$ and $(\mathbb D, \varphi , g_{\vert \mathbb D})$ is a $2n$-dimensional
almost paracomplex manifold. Since $g$ is non-degenerate metric on $M$ and $\xi $ is non-isotropic,
the paracontact distribution $\mathbb D$ is non-degenerate.

An immediate consequence of the definition of the almost
paracontact structure is that the endomorphism $\varphi$ has rank
$2n$,
%an almost paracontact structure  $(\varphi,\xi,\eta)$ has the properties
$\varphi \xi=0$ and $\eta \circ \varphi=0$, (see \cite{B1,B2}
for the almost contact case).

%Following  \cite{B1,B2},

%\begin{thm}\label{t1}
%Suppose $M^{(2n+1)}$ has a $(\varphi,\xi,\eta)$-structure. Then
%$\varphi \xi=0$ and $\eta \circ \varphi=0$. Moreover the
%endomorphism $\varphi$ has rank $2n$.
%\end{thm}
If a manifold $M^{(2n+1)}$ with $(\varphi,\xi,\eta)$-structure
admits a pseudo-Riemannian metric $g$ such that
\begin{equation}\label{con}
g(\varphi X,\varphi Y)=-g(X,Y)+\eta (X)\eta (Y),
\end{equation}
then we say that $M^{(2n+1)}$ has an almost paracontact metric structure and
$g$ is called \emph{compatible}. Any compatible metric $g$ with a given almost paracontact
structure is necessarily of signature $(n+1,n)$.

Setting $Y=\xi$, we have
%immediately that
$\eta(X)=g(X,\xi).$
%As in the case of an almost paracomplex structure the existence of
%the compatible metric is easy.

Any almost paracontact structure admits a compatible metric.
\begin{defn}
If $g(X,\varphi Y)=d\eta(X,Y)$ (where
$d\eta(X,Y)=\frac12(X\eta(Y)-Y\eta(X)-\eta([X,Y])$ then $\eta$ is
a paracontact form and the almost paracontact metric manifold
$(M,\varphi,\eta,g)$ is said to be a $\emph{paracontact metric
manifold}$.
\end{defn}
%Indeed, if G is any metric, first set
%$\overline{G}(X,Y)=G(\varphi^2 X,\varphi^2 Y)+\eta(X)\eta(Y)$;
%then $\eta(X)=\overline{G}(X,\xi)$. Now define $g$ by
%$g(X,Y)=\frac{1}{2}(\overline{G}(X,Y)-\overline{G}(\varphi X,\varphi Y)+\eta(X)\eta(Y))$
%end check $g$ is compatible.
%the details.

Denoting by $\pounds$ and $R$ the Lie differentiation and the curvature tensor respectively, we define the operators $\tau$, $l$ and $h$ by
\begin{equation}\label{1}
\tau=\pounds_{\xi}g, \quad lX=R(X,\xi)\xi, \quad h=\frac{1}{2}\pounds_{\xi}\varphi.
\end{equation}
The $(1,1)-$type tensors $h$ and $l$ are symmetric and satisfy
\begin{equation}\label{2}
l\xi=0,\quad h\xi=0,\quad trh=0,\quad trh\varphi=0 \quad and \quad h\varphi=-\varphi h.
\end{equation}
We also have the following formulas for a paracontact manifold:
\begin{equation}\label{3}
\nabla_{X}\xi=-\varphi X+\varphi hX \ (and \ hence \ \nabla_{\xi}\xi=0)
\end{equation}
\begin{equation}\label{4}
\nabla_{\xi}\varphi=0
\end{equation}
\begin{equation}\label{5}
trl=g(Q\xi,\xi)=-2n+trh^2
\end{equation}
\begin{equation}\label{6}
\varphi l\varphi+l=-2(\varphi^2-h^2)
\end{equation}
\begin{equation}\label{7}
\nabla_{\xi}h=-\varphi-\varphi l+\varphi h^2,
\end{equation}
where $tr$ is the trace of the operator, $Q$ is the Ricci operator and $\nabla$ is the Levi-Civita connection of $g$. The formulas are proved in \cite{Z}.

A paracontact metric manifolds for which $\xi$ is Killing is called a $K-\emph{paracontact}$ $\emph{manifold}$. A paracontact structure on $M^{(2n+1)}$ naturally gives rise to an almost paracomplex structure on the product $M^{(2n+1)}\times\Re$. If this almost paracomplex structure is integrable, the given paracontact metric manifold is said to be a $\emph{para-Sasakian}$. Equivalently, (see \cite{Z}) a paracontact metric manifold is a para-Sasakian if and only if
\begin{equation}\label{8}
(\nabla_{X}\varphi)Y=-g(X,Y)\xi+\eta(Y)X,
\end{equation}
for all vector fields $X$ and $Y$.

It is easy to see that a $3-$dimentional paracontact manifold is para-Sasakian if and only if $h=0$. For details we refer the reader to \cite{JW},\cite{Z}.

A paracontact metric structure is said to be  $\eta-\emph{Einstein}$ if
\begin{equation}\label{8.1}
Q=a.id+b.\eta\otimes\xi,
\end{equation}
where $a,b$ are smooth functions on $M^{(2n+1)}$. We also recall that the $k-$nullity distribution $N(k)$ of a pseudo-Riemannian manifold $(M,g)$, for a real number $k$, is the distribution
\begin{equation}
N_p(k)=\{Z\in T_pM:R(X,Y)Z=k(g((Y,Z)X-g(X,Z)Y)\},
\end{equation}
for any $X,Y\in T_pM$ ( see \cite{T}).

Finally, the sectional curvature $K(\xi,X)=\epsilon_XR(X,\xi,\xi,X)$, where $|X|=\epsilon_X=\pm 1$, of a plane section spanned by $\xi$ and the vector $X$ orthogonal to $\xi$ is called $\emph{$\xi$-sectional curvature}$, whereas the sectional curvature $K(X,\varphi X)=-R(X,\varphi X,\varphi X,X)$, where $|X|=-|\varphi X|=\pm 1$,
of a plane section spanned by vectors $X$ and $\varphi X$ orthogonal to $\xi$ is called a $\emph{$\varphi$-sectional curvature}$.

\section{Some properties of the torsion $\tau$}\label{sec-3}
In this chapter we discuss some aspects of the torsion of
paracontact metric manifolds. We begin with some preliminaries concerning
the tensor field $\tau$.
\begin{lem}
On a paracontact manifold $M^{(2n+1)}(\varphi,\xi,\eta,g)$ we have the formulas
\begin{equation}\label{f1}
\tau(X,Y)=-2g(\varphi X,hY)
\end{equation}
\begin{equation}\label{f2}
\tau(\xi,\cdot)=0
\end{equation}
\begin{equation}\label{f3}
\tau(X,Y)=\tau(Y,X)
\end{equation}
\begin{equation}\label{f4}
\tau(X,\varphi Y)=\tau(\varphi X,Y)
\end{equation}
\begin{equation}\label{f5}
\tau(\varphi X,\varphi Y)=\tau(X,Y)
\end{equation}
\end{lem}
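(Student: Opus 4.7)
The plan is to start from the definition $\tau=\pounds_\xi g$ and, because $\nabla$ is metric and torsion-free, rewrite it as
\begin{equation*}
\tau(X,Y)=g(\nabla_X\xi,Y)+g(X,\nabla_Y\xi),
\end{equation*}
so that formula \eqref{3} feeds in directly. Substituting $\nabla_X\xi=-\varphi X+\varphi hX$ yields four terms, which I will simplify with three algebraic facts from the preliminaries: (a) $\varphi$ is $g$-skew, $g(\varphi X,Y)=-g(X,\varphi Y)$, which follows from the paracontact identity $g(X,\varphi Y)=d\eta(X,Y)$ and antisymmetry of $d\eta$; (b) $h$ is $g$-symmetric, as stated just after \eqref{1}; and (c) the anti-commutation $h\varphi=-\varphi h$ from \eqref{2}. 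In the expansion, the two $\varphi$-only terms $-g(\varphi X,Y)-g(X,\varphi Y)$ cancel by (a), while the two $h$-terms combine, by (b) and (c), into $-2g(\varphi X,hY)$. This establishes \eqref{f1}.

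The remaining four identities I would deduce from \eqref{f1}. Formula \eqref{f2} is immediate from $\varphi\xi=0$. The symmetry \eqref{f3} can be read off either from $\pounds_\xi g$ being a symmetric $(0,2)$-tensor, or directly inside \eqref{f1} by shuttling $\varphi$ and $h$ across via (a)--(c). For \eqref{f4} and \eqref{f5} I would expand using $h\varphi=-\varphi h$ together with $\varphi^2=\id-\eta\otimes\xi$; the $\eta\otimes\xi$ contributions vanish because $\eta(hX)=g(hX,\xi)=g(X,h\xi)=0$, a direct consequence of $h\xi=0$ and the symmetry of $h$. Each of these is a short one-line manipulation starting from \eqref{f1}.

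There is no serious obstacle here; the calculation is essentially bookkeeping. The one input not visible among the displayed identities \eqref{1}--\eqref{7}, and therefore the easiest point to overlook, is the skew-adjointness of $\varphi$ with respect to $g$. Since this property is special to the \emph{paracontact metric} setting (rather than a merely compatible metric on an almost paracontact structure), extracting it from the definition is the single step that requires a moment's care before the rest of the argument unwinds mechanically.
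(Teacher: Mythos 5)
Your proof is correct and follows essentially the route the paper intends: its own proof consists of the single line ``Calculation is straightforward using \eqref{2} and \eqref{3}'', and your expansion of $\pounds_\xi g$ via $\nabla_X\xi=-\varphi X+\varphi hX$ together with the symmetry of $h$, $h\varphi=-\varphi h$, and the skew-adjointness of $\varphi$ is exactly that calculation carried out. (One minor quibble: the skew-adjointness $g(\varphi X,Y)=-g(X,\varphi Y)$ already follows from the compatibility condition \eqref{con} alone, so it is not special to the paracontact metric case, though your derivation from $d\eta$ is equally valid.)
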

\begin{proof}
Calculation is straightforward using \eqref{2} and \eqref{3}.
\end{proof}
\begin{pro}\label{p1}
Let $M^{(2n+1)}$ be a paracontact metric manifold with paracontact metric structure $(\varphi,\xi,\eta,g)$. Then the tensor field $\nabla_{\xi}\tau$ satisfies the following properties:

i) $(\nabla_{\xi}\tau)(X,Y)=(\nabla_{\xi}\tau)(Y,X)$

ii) $(\nabla_{\xi}\tau)(X,\cdot)=0$

iii) $(\nabla_{\xi}\tau)(\varphi X,\varphi Y)=(\nabla_{\xi}\tau)(X,Y)$

iv) for $X\in \mathbb {D}$, $|X|=\epsilon_X=\pm 1$, the sectional curvature $K(\xi,X)$ is given by
$$K(\xi,X)=-\frac{1}{2}\epsilon_X(\nabla_{\xi}\tau)(X,X)-1+\epsilon_X|hX|^2$$

v) $\nabla_{\xi}\tau=0$ if and only if $K(\xi,X)-K(\xi,Y)=\epsilon_X|hX|^2-\epsilon_Y|hY|^2$ for every $X,Y\in \mathbb {D}$, $|X|=\epsilon_X=\pm 1$ and $|Y|=\epsilon_Y=\pm 1$

vi) if $n=1$, then
$$-(\nabla_{\xi}\tau)(X,Y)=Ric(X,Y)+Ric(\varphi X,\varphi Y)-\eta(X)Ric(\xi,Y)-\eta(Y)Ric(\xi,X)+$$
$$+\eta(X)\eta(Y)Ric(\xi,\xi)$$
\end{pro}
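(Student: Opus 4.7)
I would handle (i)--(iii) as immediate consequences of the previous Lemma together with $\nabla_\xi\xi=0$ from \eqref{3} and $\nabla_\xi\varphi=0$ from \eqref{4}. For (i), the symmetry $\tau(X,Y)=\tau(Y,X)$ of \eqref{f3} is preserved under $\nabla_\xi$ because each term in $(\nabla_\xi\tau)(X,Y)=\xi\,\tau(X,Y)-\tau(\nabla_\xi X,Y)-\tau(X,\nabla_\xi Y)$ is already symmetric in $X,Y$. For (ii), which I read as $(\nabla_\xi\tau)(\xi,Y)=0$, all three terms of the right-hand side vanish by $\tau(\xi,\cdot)=0$ from \eqref{f2} together with $\nabla_\xi\xi=0$. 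For (iii), $\nabla_\xi(\varphi Z)=\varphi\nabla_\xi Z$ by \eqref{4}, so differentiating the identity $\tau(\varphi X,\varphi Y)=\tau(X,Y)$ of \eqref{f5} along $\xi$ yields the claim.

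The key computational step is (iv). First I would establish the general identity $(\nabla_\xi\tau)(X,Y)=-2g(\varphi X,(\nabla_\xi h)Y)$ by applying $\nabla_\xi$ to \eqref{f1}, using $\nabla_\xi\varphi=0$ and the metricity of $\nabla$. Substituting \eqref{7} yields $(\nabla_\xi\tau)(X,Y)=-2g(\varphi X,\varphi(-Y-lY+h^2Y))$, and then \eqref{con} eliminates the outer $\varphi$'s, with the $\eta$-correction disappearing on $\mathbb{D}$ because $\eta(lY)=\eta(h^2Y)=0$ (consequences of $l\xi=h\xi=0$ and symmetry of $l,h$). Setting $X=Y\in\mathbb{D}$ with $g(X,X)=\epsilon_X$ gives $(\nabla_\xi\tau)(X,X)=-2\epsilon_X-2g(X,lX)+2|hX|^2$. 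Combined with $g(X,lX)=\epsilon_X K(\xi,X)$, rearrangement produces (iv).

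For (v), the forward direction is immediate from (iv). For the converse, the hypothesis forces $\epsilon_X(\nabla_\xi\tau)(X,X)$ to take a single value $c$ for every unit $X\in\mathbb{D}$; rescaling non-null vectors gives $(\nabla_\xi\tau)(X,X)=c\,g(X,X)$ on a dense subset of $\mathbb{D}$, and polarizing the symmetric form $\nabla_\xi\tau$ (available by (i)) yields $(\nabla_\xi\tau)|_{\mathbb{D}\times\mathbb{D}}=c\,g|_{\mathbb{D}\times\mathbb{D}}$. Taking the $g$-trace of the formula in (iv) over an orthonormal basis of $\mathbb{D}$ and invoking $\tr l=-2n+\tr h^2$ from \eqref{5}, the sums on the right-hand side cancel and one finds $\tr_g(\nabla_\xi\tau)=0$; together with (ii) this forces $2nc=0$, hence $c=0$ and $\nabla_\xi\tau=0$.

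The hard part is (vi), where the dimension restriction $n=1$ enters. Both sides are symmetric bilinear forms vanishing whenever one entry is $\xi$ (immediate from (i), (ii), and direct inspection of the right-hand side), so it suffices to verify the identity on $\mathbb{D}\times\mathbb{D}$. There the right-hand side reduces to $\Ric(X,Y)+\Ric(\varphi X,\varphi Y)$, while the computation in (iv) gives $-(\nabla_\xi\tau)(X,Y)=2g(X,Y)+2g(X,lY)-2g(hX,hY)$. In dimension three the curvature tensor is determined by Ricci and scalar curvature $s$ via the standard identity $R(X,Y)Z=g(Y,Z)QX-g(X,Z)QY+\Ric(Y,Z)X-\Ric(X,Z)Y-\tfrac{s}{2}(g(Y,Z)X-g(X,Z)Y)$, which on $\mathbb{D}$ gives $g(X,lY)=\Ric(X,Y)+(\Ric(\xi,\xi)-\tfrac{s}{2})g(X,Y)$, while \eqref{6} yields $h^2=\tfrac{1}{2}(l+\varphi l\varphi)+\varphi^2$ so that $g(hX,hY)$ may be expressed through Ric on $\mathbb{D}$ using the skew-symmetry $g(\varphi X,Y)=-g(X,\varphi Y)$ coming from \eqref{con}. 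Plugging these expressions into $2g(X,Y)+2g(X,lY)-2g(hX,hY)$, the $g(X,Y)$, $\Ric(\xi,\xi)g(X,Y)$, and $s$-contributions cancel and the result collapses exactly to $\Ric(X,Y)+\Ric(\varphi X,\varphi Y)$. The main delicate point is this last cancellation bookkeeping, since the three-dimensional formula for $h^2$ involves both Ric and its $\varphi$-twist.
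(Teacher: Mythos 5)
Your argument is correct in all six parts, and (i)--(iv) coincide with the paper's own proof: the identity $(\nabla_{\xi}\tau)(X,Y)=-2g(\varphi X,(\nabla_{\xi}h)Y)$ from \eqref{f1} and \eqref{4}, then \eqref{7} and \eqref{con} to get $(\nabla_{\xi}\tau)(X,X)=-2\epsilon_X-2g(X,lX)+2|hX|^2$ (your reading of (ii) as $(\nabla_{\xi}\tau)(\xi,\cdot)=0$ is also what the paper intends). You genuinely diverge in (v) and (vi). For the converse of (v) the paper simply puts $Y=\varphi X$ into $\epsilon_X(\nabla_{\xi}\tau)(X,X)=\epsilon_Y(\nabla_{\xi}\tau)(Y,Y)$ and uses (iii) with $\epsilon_{\varphi X}=-\epsilon_X$ to force $(\nabla_{\xi}\tau)(X,X)=0$ in one line; your trace argument also works --- indeed $\sum_i\epsilon_i(\nabla_{\xi}\tau)(e_i,e_i)=-4n-2\,trl+2\,trh^2=0$ by \eqref{5}, so $2nc=0$ --- but it needs the density of non-null vectors and a polarization that the $\varphi X$ trick avoids. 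For (vi) the paper stays with sectional curvatures: from (iv) and $|h\varphi X|^2=-|hX|^2$ it gets $K(\xi,X)-K(\xi,\varphi X)=-\epsilon_X(\nabla_{\xi}\tau)(X,X)$, and then uses that in dimension three $Ric(X,X)+Ric(\varphi X,\varphi X)=\epsilon_X\left(K(\xi,X)-K(\xi,\varphi X)\right)$, finishing by polarization and the reduction via $\varphi^2X=X-\eta(X)\xi$. You instead feed the three-dimensional curvature decomposition \eqref{9} (giving $g(X,lY)=Ric(X,Y)+(Ric(\xi,\xi)-\frac{scal}{2})g(X,Y)$ on $\mathbb{D}$) and the identity $h^2=\frac{1}{2}(l+\varphi l\varphi)+\varphi^2$ from \eqref{6} into $-(\nabla_{\xi}\tau)(X,Y)=2g(X,Y)+2g(X,lY)-2g(hX,hY)$; I checked the bookkeeping and the $g(X,Y)$, $Ric(\xi,\xi)$ and $scal$ terms do cancel, leaving $Ric(X,Y)+Ric(\varphi X,\varphi Y)$ on $\mathbb{D}\times\mathbb{D}$, after which your reduction of general arguments to $\mathbb{D}$ via symmetry and vanishing on $\xi$ is sound. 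Your route is more mechanical and makes explicit where \eqref{6} and \eqref{9} enter; the paper's is shorter but relies on the (unstated) fact that in dimension three the Ricci form of a unit vector is the sum of the two sectional curvatures through it.
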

\begin{proof}
i) $\nabla_{\xi}\tau$ is  symmetric, because $\tau$ is  symmetric.

ii) follows from \eqref{f2} and \eqref{3}.

iii) follows from \eqref{f4}, \eqref{f5} and \eqref{4}.

iv) From \eqref{f1} and \eqref{4} we obtain
\begin{equation}\label{f6}
(\nabla_{\xi}\tau)(X,Y)=-2g(\varphi X,(\nabla_{\xi}h)Y).
\end{equation}
From \eqref{4} and \eqref{f6}, since $h$ is a symmetric and anticommutes with $\varphi$, we get
$$K(\xi,X)=\epsilon_XR(X,\xi,\xi,X)=-\frac{1}{2}\epsilon_X(\nabla_{\xi}\tau)(X,X)-1+\epsilon_X|hX|^2.$$

v) If $\nabla_{\xi}\tau=0$, from iv), we have
$$K(\xi,X)-K(\xi,Y)=\epsilon_X|hX|^2-\epsilon_Y|hY|^2$$
for every $X,Y\in \mathbb {D}$, $|X|=\epsilon_X=\pm 1$ and $|Y|=\epsilon_Y=\pm 1$. Conversely, if the formula hods, then iv) implies $\epsilon_X(\nabla_{\xi}\tau)(X,X)=\epsilon_Y(\nabla_{\xi}\tau)(Y,Y)$.
Choosing $Y=\varphi X$, by iii) and $\epsilon_Y=-\epsilon_X$, we obtain
$$(\nabla_{\xi}\tau)(X,X)=0$$
for any $X,\in \mathbb {D}$, $|X|=\epsilon_X=\pm 1$. So, by ii), we have $\nabla_{\xi}\tau=0$.

vi) For $X\in \mathbb {D}$, $|X|=\epsilon_X=\pm 1$, since $h\varphi=-\varphi h$, we have $|hX|^2=-|h\varphi X|^2$ and hence iv) implies
\begin{equation}\label{f7}
K(\xi,X)-K(\xi,\varphi X)=-\epsilon_X(\nabla_{\xi}\tau)(X,X).
\end{equation}
Since $dimM=3$, from \eqref{f7} it follows that
$$Ric(X,X)+Ric(\varphi X,\varphi X)=-(\nabla_{\xi}\tau)(X,X).$$
Consequently, for $X,Y\in \mathbb {D}$ with $|X|=\epsilon_X=\pm 1$ and $|Y|=\epsilon_Y=\pm 1$,
$$Ric(X+Y,X+Y)+Ric(\varphi (X+Y),\varphi (X+Y))=-(\nabla_{\xi}\tau)(X+Y,X+Y)$$
implies
\begin{equation}\label{f8}
Ric(X,Y)+Ric(\varphi X,\varphi Y)=-(\nabla_{\xi}\tau)(X,Y).
\end{equation}
Finally for $X,Y\in TM$, $\varphi X,\varphi Y\in \mathbb {D}$  and $\varphi^2X=X-\eta(X)\xi$, $\varphi^2Y=Y-\eta(Y)\xi$, therefore by iii) and \eqref{f8}, we get the property vi).
\end{proof}
\begin{pro}\label{p1}
In a paracontact pseudo-Riemannian manifold $M^{(2n+1)}(\varphi,\xi,\eta,g)$, the following three conditions are equivalent:

i) $\nabla_{\xi}h=0$,

ii) $\nabla_{\xi}\tau=0$,

iii) $l\varphi=\varphi l$.
\end{pro}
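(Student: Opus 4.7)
The plan is to establish the cycle (i)$\Leftrightarrow$(ii) and (i)$\Leftrightarrow$(iii), using respectively formula \eqref{f6} from the previous proposition and formulas \eqref{6}, \eqref{7} from the preliminaries.

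For the equivalence (i)$\Leftrightarrow$(ii), I would start from \eqref{f6}, namely
\[
(\nabla_{\xi}\tau)(X,Y)=-2g(\varphi X,(\nabla_{\xi}h)Y).
\]
The direction $\nabla_{\xi}h=0\Rightarrow\nabla_{\xi}\tau=0$ is immediate. For the converse, the point is to observe that $(\nabla_{\xi}h)Y$ lies in the paracontact distribution $\mathbb{D}$: since $h\xi=0$ and $\nabla_{\xi}\xi=0$, one has $\nabla_{\xi}\eta=0$, hence $\eta((\nabla_{\xi}h)Y)=0$. As $X\mapsto\varphi X$ surjects $TM$ onto $\mathbb{D}$ and $g$ is non-degenerate on $\mathbb{D}$, \eqref{f6} together with $\nabla_{\xi}\tau=0$ forces $(\nabla_{\xi}h)Y=0$ for every $Y$.

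For (i)$\Leftrightarrow$(iii), the strategy is to show that each condition is equivalent to $l=h^{2}-\varphi^{2}$. Assume (iii), i.e.\ $l\varphi=\varphi l$. Then $\varphi l\varphi=l\varphi^{2}=l(I-\eta\otimes\xi)=l$, since $l\xi=0$. Plugging into \eqref{6} yields $2l=2(h^{2}-\varphi^{2})$, that is $l=h^{2}-\varphi^{2}$. Substituting into \eqref{7} gives
\[
\nabla_{\xi}h=-\varphi-\varphi l+\varphi h^{2}=-\varphi-\varphi(h^{2}-\varphi^{2})+\varphi h^{2}=-\varphi+\varphi^{3}=0,
\]
where the last equality uses $\varphi^{3}=\varphi(I-\eta\otimes\xi)=\varphi$. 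Conversely, if $\nabla_{\xi}h=0$ then \eqref{7} gives $\varphi(h^{2}-l-I)=0$; applying $\varphi$ once more and using $\eta\circ h=0$, $\eta\circ l=0$, I get $l=h^{2}-\varphi^{2}$, and hence $\varphi l=\varphi(h^{2}-\varphi^{2})=(h^{2}-\varphi^{2})\varphi=l\varphi$, because $\varphi$ commutes with $\varphi^{2}$ and anticommutes twice with $h$.

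The only genuine subtlety is the (ii)$\Rightarrow$(i) step, where one must argue that $(\nabla_{\xi}h)Y$ has no component along $\xi$ before concluding from non-degeneracy of $g|_{\mathbb{D}}$; the rest is bookkeeping with the algebraic identities \eqref{2}--\eqref{7}.
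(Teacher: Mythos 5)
Your proof is correct and uses the same ingredients as the paper's: the identity \eqref{f6} for (i)$\Leftrightarrow$(ii), and formulas \eqref{6} and \eqref{7} for (i)$\Leftrightarrow$(iii). The only organisational difference is that the paper combines \eqref{6} and \eqref{7} into the single commutator identity $2\nabla_{\xi}h=l\varphi-\varphi l$ (its formula \eqref{f9}), which yields (i)$\Leftrightarrow$(iii) in one stroke, whereas you route both conditions through the intermediate identity $l=h^{2}-\varphi^{2}$; your write-up also makes explicit the non-degeneracy argument for (ii)$\Rightarrow$(i) and the use of $\eta\circ l=\eta\circ h^{2}=0$, which the paper leaves implicit.
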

\begin{proof}
From \eqref{f6} it follows that i) is equivalent to ii). Assuming i), from \eqref{7}
$$-l=\varphi^2-h^2.$$
Differentiating this equation with respect to $\xi$ and using \eqref{4}, we have
$$\nabla_{\xi}l=\nabla_{\xi}h^2=(\nabla_{\xi}h)h+h(\nabla_{\xi}h)=0.$$
Finally, from \eqref{6} and \eqref{7}, we obtain
\begin{equation}\label{f9}
2\nabla_{\xi}h=l\varphi-\varphi l
\end{equation}
and hence i) is equivalent to iii).
\end{proof}
\begin{rem}
These three conditions are equivalent to $\nabla_{\xi}l=0$ in the contact case. However, in the paracontact case from $\nabla_{\xi}l=0$ it follows only that $(\nabla_{\xi}h)^2=0$.
\end{rem}
\section{Main results}\label{sec-4}
In this section we consider  $3-$dimensional paracontact metric manifolds. Before we state our first result we need the following lemma which is incorrectly stated and proved in \cite{SS}. We will use many of the formulas which will appear in the proof.
\begin{lem}\label{l1}
Let $M^3$ be a paracontact metric manifold with a paracontact metric structure $(\varphi,\xi,\eta,g)$ such that $\varphi Q=Q\varphi$. Then the function $trl$ is constant everywhere on $M^3$.
\end{lem}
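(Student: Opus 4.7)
The plan is to show $df = 0$ on $M^3$ for $f := \tr l$, handling the directions tangent to $\mathbb{D}$ and the $\xi$-direction separately.

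First I would extract the $\eta$-Einstein form of $Q$. Applying $\varphi$ to the identity $Q\varphi = \varphi Q$ evaluated at $\xi$ yields $\varphi(Q\xi) = 0$, so $Q\xi \in \ker\varphi = \Span{\xi}$; combined with (5) this forces $Q\xi = f\xi$. On $\mathbb{D}$ the operator $\varphi$ is an involution and is $g$-skew, so its $\pm 1$-eigenspaces $\mathbb{D}^{\pm}$ form a hyperbolic pair of isotropic lines; since $Q$ commutes with $\varphi$ it preserves each $\mathbb{D}^{\pm}$, and the $g$-symmetry of $Q$ forces the resulting two eigenvalues to coincide. Hence $Q|_{\mathbb{D}} = a\,\id$ and
\[
Q = a\,\id + b\,\eta\otimes\xi, \qquad a+b = f, \qquad r = 2a + f.
\]

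Second, I would feed this expression into the twice-contracted second Bianchi identity $\Div Q = \tfrac12\nabla r$. A direct computation of $\Div Q$ from the $\eta$-Einstein form, using (3) to compute $\nabla_X \eta$, $\nabla_\xi\xi = 0$, and $\Div\xi = -\tr\varphi + \tr(\varphi h) = 0$ (the second vanishing from (2), the first because $\varphi$ is $g$-skew), collapses to
\[
\Div Q = \nabla a + \xi(b)\,\xi.
\]
Pairing with $X \in \mathbb{D}$ and substituting $a = (r-f)/2$ makes the $r$-terms cancel, yielding $X(f) = 0$ for every $X \in \mathbb{D}$.

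Third, to obtain $\xi(f) = 0$ I would invoke Proposition 3.2. Applying the three-dimensional Ricci--curvature identity (valid because the Weyl tensor vanishes in dimension three) to $lX = R(X,\xi)\xi$ gives
\[
Q = l + \tfrac{r-2f}{2}\id + \tfrac{4f-r}{2}\,\eta\otimes\xi,
\]
from which $\varphi Q = Q\varphi$ is readily seen to be equivalent to $\varphi l = l\varphi$; by Proposition 3.2 this forces $\nabla_\xi h = 0$. Using the identity $X(\tr A) = \tr(\nabla_X A)$, valid for any $(1,1)$-tensor $A$, applied to $A = h^2$:
\[
\xi(\tr h^2) = \tr(\nabla_\xi h^2) = 2\,\tr(h\circ \nabla_\xi h) = 0,
\]
and from (5) with $n=1$, $f = -2 + \tr h^2$, whence $\xi(f) = 0$.

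Combining the two steps gives $df = 0$, so $\tr l$ is constant on $M^3$. The main obstacle is precisely the $\xi$-direction: the contracted Bianchi identity by itself controls only the $\mathbb{D}$-derivatives of $f$, and the closing ingredient is the implication $\varphi Q = Q\varphi \Rightarrow \nabla_\xi h = 0$ supplied by Proposition 3.2, which is the paracontact analogue of the classical contact metric result and must be routed through the 3-dimensional Weyl-free identity.
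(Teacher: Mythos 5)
Your proof is correct, and it takes a genuinely different route from the paper's. The paper first works with the operator $l$: from the three\-/dimensional curvature identity it deduces $\varphi l=l\varphi$, runs a case analysis ($l=0$ at a point versus $l\neq 0$ nearby) to show $lX=\tfrac{trl}{2}\varphi^2X$, and only then extracts the $\eta$-Einstein form of $Q$; the horizontal derivatives of $trl$ are killed at the very end by differentiating the full curvature tensor, invoking $\nabla_\xi R=0$ and the second Bianchi identity to get $(X\,trl)Y=(Y\,trl)X$. You instead get the $\eta$-Einstein form immediately and purely algebraically, from the fact that $\varphi|_{\mathbb D}$ is a $g$-skew involution whose $\pm1$-eigenspaces are a nondegenerately paired pair of isotropic lines preserved by $Q$ --- an argument that exploits the para (indefinite) structure, has no analogue in the Riemannian contact case, and sidesteps the paper's somewhat awkward case analysis on $l$. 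You then apply the twice-contracted Bianchi identity $\Div Q=\tfrac12\,d(scal)$ directly to $Q=a\,\id+b\,\eta\otimes\xi$ (the computation $\Div Q=\nabla a+\xi(b)\,\xi$ checks out, using $\nabla_\xi\xi=0$ and $\Div\xi=0$), which cancels the $scal$-terms and gives $X(trl)=0$ on $\mathbb D$ in one stroke. The $\xi$-direction is handled the same way in both arguments, via $\varphi l=l\varphi\Rightarrow\nabla_\xi h=0$ from Proposition 3.2 (you route it through $trl=-2+\tr h^2$, the paper through $\nabla_\xi l=0$; these are equivalent). Net effect: your version is shorter, avoids the full second Bianchi identity and the $\nabla_\xi R=0$ step, and makes the algebraic source of the $\eta$-Einstein condition more transparent; the paper's version produces along the way the explicit formulas \eqref{14}--\eqref{20} that it reuses later (e.g.\ in the local $\varphi$-symmetry theorem), which your streamlined argument does not.
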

\begin{proof}
We recall that the curvature tensor of a 3-dimensional pseudo-Riemannian manifold is given by
\begin{equation}\label{9}
R(X,Y)Z=g(Y,Z)QX-g(X,Z)QY+g(QY,Z)X-g(QX,Z)Y-
\end{equation}
$-\frac{scal}{2}(g(Y,Z)X-g(X,Z)Y),$

where scal is the scalar curvature of the manifold.

Using $\varphi Q=Q\varphi$, \eqref{5} and $\varphi\xi=0$ we have that
\begin{equation}\label{10}
Q\xi=(trl)\xi.
\end{equation}
From \eqref{9} and using \eqref{1} and \eqref{10}, we have that for any $X$,
\begin{equation}\label{11}
lX=QX+(trl-\frac{scal}{2})X+\eta(X)(\frac{scal}{2}-2trl)\xi
\end{equation}
and hence $\varphi Q=Q\varphi$ and $\varphi\xi=0$ give
\begin{equation}\label{12}
\varphi l=l\varphi.
\end{equation}
By virtue of \eqref{12}, \eqref{6} and \eqref{7}, we obtain
\begin{equation}\label{13}
-l=\varphi^2-h^2
\end{equation}
and $\nabla_{\xi}h=0$. Differentiating \eqref{13} along $\xi$ and using \eqref{4} and $\nabla_{\xi}h=0$ we find that $\nabla_{\xi}l=0$ and therefore $\xi trl=0$. If at point $p\in M^3$ there exists
$X\in T_pM$, $X\neq\xi$ such that $lX=0$, then $l=0$ at the point $P$. In fact if $Y$ is the projection of $X$ on $\mathbb {D}$, we have $lY=0$, since $l\xi=0$. Using \eqref{12} we have $l\varphi Y=0$.
So $l=0$ at the point $P$ (and thus $trl=0$ at the point $P$). We now suppose that $l\neq 0$ on a neighborhood $U$ of the point $P$. Using \eqref{12} and that $\varphi$ is antisymmetric, we get $g(\varphi X,lX)=0$.
So $lX$ is parallel to $X$ for any $X$ orthogonal to $\xi$. It is not hard to see that $lX=\frac{trl}{2}X$ for any $X$ orthogonal to $\xi$. Thus for any $X$, we have
\begin{equation}\label{14}
lX=\frac{trl}{2}\varphi^2X
\end{equation}
Substituting \eqref{14} in \eqref{11} we get
\begin{equation}\label{15}
QX=aX+b\eta(X)\xi,
\end{equation}
where $a=\frac{scal-trl}{2}$ and $b=\frac{3trl-scal}{2}$. Differentiating \eqref{15} with respect to $Y$ and using \eqref{15} and $\nabla_{\xi}\xi=0$ we find
\begin{equation}\label{16}
(\nabla_{Y}Q)X=(Ya)X+((Yb)\eta(X)+bg(X,\nabla_Y\xi))\xi+b\eta(X)\nabla_Y\xi.
\end{equation}
So using $\xi trl=0$ and $\nabla_{\xi}\xi=0$, from  \eqref{16} with $X=Y=\xi$, we have $(\nabla_{\xi}Q)\xi=0$. Also using $h\varphi=-\varphi h$, and \eqref{3}, from \eqref{16} with $X=Y$ orthogonal to $\xi$, we get
\begin{equation}\label{17}
g((\nabla_X Q)X-(\nabla_{\varphi X})\varphi X,\xi)=0.
\end{equation}
But it is well known that $$(\nabla_XQ)X-(\nabla_{\varphi X})\varphi X+(\nabla_{\xi}Q)\xi=\frac{1}{2}grad(scal),$$
for any unit vector $X$ orthogonal to $\xi$. Hence, we easily get from the last two equations that $\xi(scal)=0$, and thus $\nabla_{\xi}Q=0$. Therefore, differentiating \eqref{9} with respect to $\xi$ and using $\nabla_{\xi}Q=0$,
we have $\nabla_{\xi}R=0$. So from the second identity of Bianchi, we get
\begin{equation}\label{18}
(\nabla_X R)(Y,\xi,Z)=(\nabla_{Y}R)(X,\xi,Z).
\end{equation}
Now, substituting \eqref{15} in \eqref{9}, we obtain
\begin{equation}\label{19}
R(X,Y)Z=(\gamma g(Y,Z)+b\eta(Y)\eta(Z))X-(\gamma g(X,Z)+b\eta(X)\eta(Z))Y+
\end{equation}
$$+b(\eta(X)g(Y,Z)-\eta(Y)g(X,Z))\xi,$$
where $\gamma=\frac{scal}{2}-trl$. For $Z=\xi$, \eqref{19} gives
\begin{equation}\label{20}
R(X,Y)\xi=\frac{trl}{2}(\eta(Y)X-\eta(X)Y).
\end{equation}
Using \eqref{20}, we obtain $(\nabla_X R)(Y,\xi,\xi)=\frac{X(trl)}{2}Y$, for $X,Y$ orthogonal to $\xi$. From this and \eqref{18} for $Z=\xi$, we get $(Xtrl)Y=(Ytrl)X$. Therefore $Xtrl=0$ for $X$ orthogonal to $\xi$, but $\xi(trl)=0$,
so the function $trl$ is constant and this completes the proof of the Lemma.
\end{proof}

\begin{rem}\label{r1}
When $l=0$ everywhere, then using \eqref{9}, \eqref{10} and \eqref{11} we get $R(X,Y)\xi=0$. This together with Theorem 3.3 in \cite{ZT} gives that $M^3$ is flat.
\end{rem}
From $Proposition~\ref{p1}$ and $Lemma~\ref{l1}$ we obtain the following
\begin{pro}
Let $M^3$ be a paracontact metric manifold with paracontact metric structure $(\varphi,\xi,\eta,g)$. If for every $X\in \mathbb {D}$, we have  $\varphi X\in \mathbb {D}$, then the conditions $Q\varphi=\varphi Q$ and
$\nabla_{\xi}\tau=0$ are equivalent.
\end{pro}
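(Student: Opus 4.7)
The plan is to combine the equivalence furnished by the preceding Proposition ($\nabla_\xi\tau=0 \iff l\varphi = \varphi l$) with the three-dimensional curvature identity~\eqref{9}, so as to translate between statements about $l$ and statements about $Q$.

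For the direction $Q\varphi = \varphi Q \Rightarrow \nabla_\xi\tau=0$, the commutation evaluated at $\xi$ together with $\varphi\xi = 0$ yields $\varphi(Q\xi) = Q(\varphi\xi) = 0$, so $Q\xi \in \ker\varphi = \Span{\xi}$, and \eqref{5} identifies $Q\xi = (trl)\xi$. Substituting into \eqref{9} at $(X,\xi,\xi)$ reproduces \eqref{11} verbatim. Applying $\varphi$ to both sides of \eqref{11} (using $\varphi\xi = 0$, $\eta\circ\varphi = 0$, and $Q\varphi = \varphi Q$) produces exactly $\varphi l = l\varphi$, which is~\eqref{12}. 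The preceding Proposition then delivers $\nabla_\xi\tau = 0$.

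For the converse, the preceding Proposition turns $\nabla_\xi\tau = 0$ into $l\varphi = \varphi l$. Without yet assuming anything about $Q\xi$, identity~\eqref{9} evaluated at $(X,\xi,\xi)$ expands to
$$lX \,=\, QX - \eta(X)\,Q\xi + \Bigl(trl - \tfrac{scal}{2}\Bigr)X + \Bigl(\tfrac{scal}{2}\eta(X) - Ric(X,\xi)\Bigr)\xi.$$
Applying $\varphi$ to both sides, then substituting $\varphi X$ in place of $X$ in the same identity, subtracting, and invoking $l\varphi = \varphi l$, $\varphi\xi = 0$ and $\eta\circ\varphi = 0$, I obtain after simplification
$$Q\varphi X - \varphi QX \,=\, Ric(\varphi X,\xi)\,\xi - \eta(X)\,\varphi Q\xi$$
for every $X$. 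The hypothesis that $\varphi$ preserves $\mathbb{D}$ then enters to force this right-hand side to vanish: it pins down $Q\xi \in \Span{\xi}$, whence $\varphi Q\xi = 0$ and, by symmetry of $Ric$, $Ric(\varphi X,\xi) = g(\varphi X, Q\xi) = 0$ for every $X \in \mathbb{D}$.

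The delicate point is the last step: extracting $Q\xi \in \Span{\xi}$ from the stated hypothesis. I expect this to be the main obstacle, and to require exploiting the specific form $l = h^2 - \varphi^2$ provided by $\nabla_\xi h = 0$ (equation~\eqref{13}) in combination with the curvature identity. Everything else is a routine translation through~\eqref{9} and the preceding Proposition.
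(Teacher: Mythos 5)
Your forward direction ($Q\varphi=\varphi Q\Rightarrow\nabla_{\xi}\tau=0$) is correct and is exactly the route the paper intends: $\varphi Q\xi=Q\varphi\xi=0$ forces $Q\xi\in\Ker\varphi=\Span{\xi}$, \eqref{5} gives $Q\xi=(trl)\xi$, \eqref{9} then gives \eqref{11}, applying $\varphi$ gives \eqref{12}, and Proposition~\ref{p1} converts $l\varphi=\varphi l$ into $\nabla_{\xi}\tau=0$. No complaints there.

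The converse contains a genuine gap, which you have located but not closed. Your reduction is right: with $l\varphi=\varphi l$ in hand, \eqref{9} evaluated at $(X,\xi,\xi)$ shows that $Q\varphi-\varphi Q$ is measured precisely by the component of $Q\xi$ lying in $\mathbb{D}$, i.e.\ by $Ric(\cdot,\xi)$ restricted to $\mathbb{D}$. But the mechanism you invoke to kill that component --- ``the hypothesis that $\varphi$ preserves $\mathbb{D}$'' --- cannot do any work: by the compatibility condition \eqref{f82}(i), $\eta\circ\varphi=0$, so $\varphi X\in\Ker\eta=\mathbb{D}$ for \emph{every} $X$ on \emph{every} almost paracontact manifold. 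The hypothesis is vacuous and carries no information about $Q\xi$. What is actually needed is an argument that $\nabla_{\xi}h=0$ (equivalently $l=h^2-\varphi^2$) forces $Ric(X,\xi)=0$ for all $X\in\mathbb{D}$; since $Ric(\cdot,\xi)$ on $\mathbb{D}$ is controlled by divergence-type derivatives of $h$ in directions transverse to $\xi$, which $\nabla_{\xi}h=0$ says nothing about, no purely algebraic manipulation of \eqref{9}, \eqref{6}, \eqref{7} will produce it. You flag this yourself (``I expect this to be the main obstacle''), which is honest, but it means the implication $\nabla_{\xi}\tau=0\Rightarrow Q\varphi=\varphi Q$ is not proved. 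For comparison, the paper gives no proof of this direction either: it asserts the proposition follows from Proposition~\ref{p1} and Lemma~\ref{l1}, but Lemma~\ref{l1} only runs in the direction $Q\varphi=\varphi Q\Rightarrow l\varphi=\varphi l$, so the converse is not established there and the missing step is exactly the one you identified.
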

\begin{pro}\label{p2}
Let $M^3$ be a paracontact metric manifold with paracontact metric structure $(\varphi,\xi,\eta,g)$. Then the following conditions are equivalent:

i) $M^3$ is a $\eta-$Einstein

ii) $Q\varphi=\varphi Q$

iii) $\xi$ belongs to the $k-$nullity distribution
\end{pro}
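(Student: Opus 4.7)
The plan is to prove the cycle (i) $\Rightarrow$ (ii) $\Rightarrow$ (iii) $\Rightarrow$ (i). The easy direction (i) $\Rightarrow$ (ii) is a one-line computation: if $Q=a\,\id+b\,\eta\otimes\xi$, then both $Q\varphi X$ and $\varphi QX$ reduce to $a\,\varphi X$, by virtue of $\eta\circ\varphi=0$ and $\varphi\xi=0$ respectively.

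The implication (ii) $\Rightarrow$ (iii) is the main obstacle and relies not merely on the statement of \lemref{l1} but on the intermediate identities produced in its proof. Under $Q\varphi=\varphi Q$ that proof arrives at equation (20), which reads $R(X,Y)\xi=\tfrac{\tr l}{2}(\eta(Y)X-\eta(X)Y)$ wherever $l\neq 0$; on any open subset where $l$ vanishes, Remark~\ref{r1} yields flatness, so the same identity holds trivially with $\tr l=0$. Because \lemref{l1} also guarantees that $\tr l$ is a global constant on $M^3$, these local statements assemble into one with $k=\tr l/2$, placing $\xi$ in the $k$-nullity distribution. This is the crux: the definition of the $k$-nullity distribution demands one fixed real number $k$, and without the constancy of $\tr l$ supplied by \lemref{l1} one would only obtain a pointwise curvature identity.

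For (iii) $\Rightarrow$ (i) I would argue purely algebraically from the three-dimensional curvature formula (9) and the hypothesis $R(X,Y)\xi=k(\eta(Y)X-\eta(X)Y)$. Setting $Y=\xi$ gives $lX=k\,\varphi^2 X$, and in particular $\tr l=2k$. A Ricci contraction of the hypothesis in the first slot yields $\Ric(Y,\xi)=2k\,\eta(Y)$, hence $Q\xi=2k\,\xi$. Substituting $Z=\xi$ into (9), inserting $Q\xi=2k\,\xi$, and equating with the hypothesis produces a linear equation; solving it for $QX$ gives
$$QX=\Bigl(\tfrac{scal}{2}-k\Bigr)X+\Bigl(3k-\tfrac{scal}{2}\Bigr)\eta(X)\xi,$$
which is precisely the $\eta$-Einstein form, completing the cycle.
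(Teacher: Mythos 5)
Your proposal is correct and follows essentially the same route as the paper: (i)$\Rightarrow$(ii) by the same one-line computation, (ii)$\Rightarrow$(iii) via equation \eqref{20} and the constancy of $\tr l$ from \lemref{l1}, and (iii)$\Rightarrow$(i) by exactly the paper's comparison of the hypothesis with \eqref{9} after deducing $Q\xi=2k\xi$. Your explicit treatment of the $l=0$ case via Remark~\ref{r1} in the middle implication is slightly more careful than the paper's ``follows immediately,'' but it is the same argument.
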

\begin{proof}
$i\rightarrow ii$. This follows immediately from \eqref{8.1} and $\varphi\xi=0$.

$ii\rightarrow iii$. This follows immediately from \eqref{20} and $trl=const$.

$iii\rightarrow i$. By assumption, we have
\begin{equation}\label{21}
R(X,Y)\xi=k(\eta(Y)X-\eta(X)Y),
\end{equation}
where $k$ is a constant. From \eqref{21}, we have $Q\xi=2k\xi$ and so from \eqref{9}, we find
\begin{equation}\label{22}
R(X,Y)\xi=\eta(Y)QX-\eta(X)QY+(2k-\frac{scal}{2})(\eta(Y)X-\eta(X)Y).
\end{equation}
Comparing \eqref{21} and \eqref{22}, we get
\begin{equation}
\eta(Y)(QX+(k-\frac{scal}{2})X)-\eta(X)(QY+(k-\frac{scal}{2})Y)=0.
\end{equation}
Taking $Y$ orthogonal to $\xi$ and $X=\xi$, we have $QY=(\frac{scal}{2}-k)Y$ and so for any $Z$
\begin{equation}
QZ=(\frac{scal}{2}-k)Z+(3k-\frac{scal}{2})\eta(Z)\xi.
\end{equation}
This completes the proof.
\end{proof}
\begin{rem}
Because $a+b=trl$ (see formula \eqref{15}), using $Lemma~\ref{l1}$ and $Proposition~\ref{p1}$, we have the following: On any $\eta-$Einstein
$(Q=a.id+b.\eta\otimes\xi)$ paracontact metric manifold $M^3$, we have $a+b=const(=trl)$. It is known that for any $\eta-$Einstein para-Sasakian manifold
$M^{(2n+1)}$ $(n>1)$, we have $a=const.$ and $b=const.$ (see \cite{Z}).
\end{rem}
\begin{example}
Let $L$ be a 3-dimensional real connected Lie group and ${\g}$ be its Lie algebra with a basis $\{E_1,E_2,E_3\}$ of left invariant vector fields (see \cite{NZ}, \cite{ZN}). The Lie algebra ${\g}$ is determined
by the following commutators:
\begin{equation}\label {4.5}
\begin{array}{ll}
[E_1,E_2]=\alpha E_3, \quad [E_1,E_3]=\beta E_2, \\ \\

[E_2,E_3]=\beta E_1,  \quad \alpha\neq 0.
\end{array}
\end{equation}

We define an almost paracontact structure $(\varphi ,\xi ,\eta)$ and a pseudo-Riemannian metric $g$ in the following way:
\[
\begin{array}{llll}
\varphi E_1=E_2 , \quad \varphi E_2=E_1 , \quad \varphi E_3=0 \\
\xi =E_3 , \quad \eta (E_3)=1 , \quad \eta (E_1)=\eta (E_2)=0 , \\
g(E_1,E_1)=g(E_3,E_3)=-g(E_2,E_2)=1 ,\\
\quad g(E_i,E_j)=0, \quad i\neq j \in \{1,2,3\}.
\end{array}
\]
Then $(L,\varphi ,\xi ,\eta ,g)$ is a 3-dimensional almost paracontact metric manifold. So for $(L,\varphi ,\xi ,\eta ,g)$ to be a paracontact
metric structure we get $\alpha=-2$. Since the metric $g$ is left invariant the Koszul equality becomes
\begin{equation}
\begin{array}{l}
\nabla_{E_1}E_1=0,\quad \nabla_{E_1}E_2=-E_3,\quad \nabla_{E_1}E_3=-E_2\\
\nabla_{E_2}E_1=E_3,\quad \nabla_{E_2}E_2=0,\quad \nabla_{E_2}E_3=-E_1\\
\nabla_{E_3}E_1=-\frac{2\beta+2}{2}E_2,\quad \nabla_{E_3}E_2=-\frac{2\beta+2}{2}E_1,\quad \nabla_{E_3}E_3=0.
\end{array}
\end{equation}
The condition $Q\varphi=\varphi Q$ is equivalent to $\beta=0$. Now, it is not hard to see that
$$Ric(X,Y)=2g(X,Y)-4\eta(X)\eta(Y)\quad and \quad R(X,Y)\xi=-(\eta(Y)X-\eta(X)Y).$$
\end{example}
We have the following
\begin{pro}
Let $M^{(2n+1)}(\eta,\xi,\varphi,g)$ be a paracontact metric manifold. If $M^{(2n+1)}$ is an $\eta$-Einstein, the the Ricci tensor is given by
\begin{equation}\label{23}
Ric=(\frac{scal}{2n}+1+\frac{c^2}{4n})g+(-\frac{scal}{2n}-(2n+1)(1+\frac{c^2}{4n}))\eta\otimes\eta,
\end{equation}
where $c^2=\frac{1}{2}|\tau|^2$.

If, in addition $n=1$, then the curvature tensor is given by
\begin{equation}\label{24}
R(X,Y)Z=(\frac{scal}{2}+2(1+\frac{c^2}{4}))(g(Y,Z)X-g(X,Z)Y)+,
\end{equation}
$+(-\frac{scal}{2}-3(1+\frac{c^2}{4}))(\eta(Y)\eta(Z)X-\eta(X)\eta(Z)Y+g(Y,Z)\eta(X)\xi-g(X,Z)\eta(Y)\xi)$.
\end{pro}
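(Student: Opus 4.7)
The plan is to pin down the functions $a$ and $b$ in the $\eta$-Einstein decomposition $Q = a\cdot id + b\cdot\eta\otimes\xi$ by extracting two independent linear relations between them, and then, for $n=1$, to substitute the resulting $Q$ into the three-dimensional curvature identity \eqref{9}.

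First, writing $Ric(X,Y) = ag(X,Y) + b\eta(X)\eta(Y)$ and taking the $g$-trace in an orthonormal frame $\{E_1,\ldots,E_{2n},\xi\}$ adapted to $\xi$ (so that $\sum_{i}\epsilon_i\eta(E_i)^2 = g(\xi,\xi) = 1$) yields the first linear relation
$$scal = (2n+1)a + b.$$
Next, since $Q\xi = (a+b)\xi$, the identity \eqref{5} provides the second:
$$a+b = g(Q\xi,\xi) = trl = -2n + trh^2.$$

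The critical step, and the one that demands care because $g$ is indefinite, is to convert $trh^2$ into $c^2 = \tfrac12|\tau|^2$. From \eqref{f1} and the symmetry of $h$, the $(1,1)$-tensor $T$ associated with $\tau$ via $g(TX,Y) = \tau(X,Y)$ equals $T = -2h\varphi$. The compatibility condition \eqref{con} forces $\varphi$ to be skew-adjoint with respect to $g$, so combined with $h\varphi = -\varphi h$ and the symmetry of $h$ the operator $T$ is self-adjoint; consequently $|\tau|^2 = tr\,T^2$. A direct calculation using $h\varphi h\varphi = -h^2\varphi^2$, $\varphi^2 = id - \eta\otimes\xi$, and $h\xi = 0$ shows $T^2 = -4h^2 + 4h^2(\eta\otimes\xi)$, whose trace is $-4\,trh^2$; hence $trh^2 = -c^2/2$.

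Substituting this into the second relation and solving the resulting $2\times 2$ linear system produces
$$a = \frac{scal}{2n} + 1 + \frac{c^2}{4n},\qquad b = -\frac{scal}{2n} - (2n+1)\left(1 + \frac{c^2}{4n}\right),$$
which is precisely \eqref{23}. For the second assertion, set $n=1$ and substitute $QX = aX + b\eta(X)\xi$ into \eqref{9}: the $g$-symmetric terms collect with coefficient $2a - scal/2 = scal/2 + 2(1 + c^2/4)$, while the $\eta$-dependent block assembles with coefficient $b = -scal/2 - 3(1 + c^2/4)$, reproducing \eqref{24}. The main obstacle is the sign bookkeeping in the $|\tau|^2$ computation; the remaining steps are essentially linear algebra and direct substitution.
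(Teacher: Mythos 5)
Your proposal is correct and follows essentially the same route as the paper: extract the two linear relations $scal=(2n+1)a+b$ and $a+b=g(Q\xi,\xi)=-2n+trh^2$, convert $trh^2$ into $-\tfrac14\abs{\tau}^2=-\tfrac{c^2}{2}$, solve for $a,b$, and for $n=1$ substitute into \eqref{9}. The only (harmless) difference is that you derive $trh^2=-\tfrac14\abs{\tau}^2$ directly from \eqref{f1} via the operator $2\varphi h$, whereas the paper quotes the identity $trh^2=\tfrac14\abs{\pounds_{\xi}\varphi}^2=-\tfrac14\abs{\pounds_{\xi}g}^2$ from \cite{Z}; both give the same sign, which is indeed the one point that needs care for the indefinite metric.
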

\begin{proof}
Let $(e_i,\varphi e_i,\xi)$ be an orthonormal $\varphi$-bases. From equation \eqref{8.1} we have $Ric(\xi,\xi)=a+b$. Besides (see \cite{Z}) and the equality \eqref{5} we get
$$Ric(\xi,\xi)=-2n+trh^2=-2n+\frac{1}{4}|\pounds_{\xi}\varphi|^2=-2n-\frac{1}{4}|\pounds_{\xi}g|^2=-2n(1+\frac{c^2}{4n}).$$
Consequently
\begin{equation}\label{24.1}
a+b=-2n-\frac{c^2}{2}.
\end{equation}
Moreover \eqref{8.1} implies
\begin{equation}\label{24.2}
scal=(2n+1)a+b.
\end{equation}
From equalities \eqref{24.1} and \eqref{24.2} we get

$a=\frac{scal}{2n}+1+\frac{c^2}{4n}$ and $b=-\frac{scal}{2n}-(2n+1)(1+\frac{c^2}{4n})$.

Finally, when $n=1$, the curvature tensor is given by equality \eqref{9}. So \eqref{24}
follows from \eqref{23} and \eqref{9}.
\end{proof}
\begin{rem}
If $M^{(2n+1)}$ is an Einstein paracontact metric manifold, by \eqref{23} the scalar curvature $scal=-2n(2n+1)(1+\frac{c^2}{4n})$. If $M^{(2n+1)}$ is a $K-$paracontact Einstein manifold, then
$scal=-2n(2n+1)$. In the paracontact case from $scal=-2n(2n+1)$ follows only that $|\tau|^2=0$, but not that $M^{(2n+1)}$ is a $K-$paracontact Einstein manifold.
\end{rem}

Let $M^{(2n+1)}(\eta,\xi,\varphi,g)$ be a paracontact pseudo-Riemannian manifold and $\alpha$ a positive number. Making a change to the structure tensors of the form
$$\overline{\eta}=\alpha\eta, \overline{\xi}=\frac{1}{\alpha}\xi, \overline{\varphi}=\varphi, \overline{g}=\alpha g+\beta\eta\otimes\eta,$$
where $\beta=\alpha(\alpha-1)$. We get a new paracontact pseudo-Riemannian manifold  $M^{(2n+1)}(\overline{\eta},\overline{\xi},\overline{\varphi},\overline{g})$. This transformation is known as a
$\mathbb{D}$-homothetic transformation and it was introdused in \cite{Z}. By a direct computation one can see that the Ricci tensor transforms in the following manner:
$$\overline{Ric}(X,Y)=Ric(X,Y)+2\frac{\beta}{\alpha}g(X,Y)-2\frac{\beta}{\alpha^2}((2n+1)\alpha+n\beta)\eta(X)\eta(Y).$$
From the last equation, we obtain
$$\alpha\overline{Q}\xi=Q\xi-(1-\frac{1}{\alpha})(2n(\alpha+1)+trl)\xi.$$
Supposing that $M^{(2n+1)}(\eta,\xi,\varphi,g)$ satisfies $Q\xi=(trl)\xi$, we get from the last equation, $\overline{Q} \overline{\xi}=(tr\overline{l})\overline{\xi}$, where
$$tr\overline{l}=\frac{1}{\alpha^2}(trl-2n(\alpha^2-1)).$$
So we have proved the following
\begin{pro}
Let $M^{(2n+1)}(\eta,\xi,\varphi,g)$ be a paracontact pseudo-Riemannian manifold. Then the condition $Q\xi=(trl)\xi$ is invariant under a $\mathbb{D}$-homothetic transformation.
\end{pro}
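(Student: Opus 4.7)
The plan is to piece together the two formulas already derived in the text immediately preceding the proposition and then invoke the hypothesis $Q\xi=(\tr l)\xi$ to recognise that the barred analogue holds automatically.

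First I would record the transformation law for the Ricci tensor under the $\mathbb{D}$-homothetic change, namely
$$\overline{Ric}(X,Y)=Ric(X,Y)+2\tfrac{\beta}{\alpha}g(X,Y)-2\tfrac{\beta}{\alpha^2}\bigl((2n+1)\alpha+n\beta\bigr)\eta(X)\eta(Y).$$
Deriving this identity is the only step that requires real calculation and is the main obstacle; it proceeds by computing the Levi-Civita connection $\overline{\nabla}$ of $\overline{g}=\alpha g+\beta\eta\otimes\eta$ in terms of $\nabla$, using the identity $\nabla_X\xi=-\varphi X+\varphi hX$ from \eqref{3} to handle the derivatives of $\eta$ and $\xi$, then differentiating to obtain $\overline{R}$ and contracting.

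Next, I would specialise this formula by taking $Y=\xi$ and raising an index with $\overline{g}$. Using $\overline{\xi}=\xi/\alpha$, $\overline{\eta}=\alpha\eta$, the computation $\overline{g}(\xi,\xi)=\alpha+\beta=\alpha^2$, and the relation $\beta=\alpha(\alpha-1)$, this collapses to
$$\alpha\,\overline{Q}\xi=Q\xi-\Bigl(1-\tfrac{1}{\alpha}\Bigr)\bigl(2n(\alpha+1)+\tr l\bigr)\xi,$$
which is precisely the formula quoted just before the proposition.

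Finally, substituting the hypothesis $Q\xi=(\tr l)\xi$ into the previous display shows that $\overline{Q}\xi$ is a scalar multiple of $\xi$, and hence $\overline{Q}\overline{\xi}$ is a scalar multiple of $\overline{\xi}$. Taking the $\overline{g}$-inner product with $\overline{\xi}$ and invoking the barred version of \eqref{5} identifies this scalar as $\tr \overline{l}$, with the explicit value
$$\tr\overline{l}=\tfrac{1}{\alpha^2}\bigl(\tr l-2n(\alpha^2-1)\bigr).$$
This establishes $\overline{Q}\overline{\xi}=(\tr\overline{l})\overline{\xi}$ and therefore the invariance. All of the heavy lifting is in the Ricci transformation formula; the proposition itself is then a purely algebraic verification.
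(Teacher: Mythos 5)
Your proposal follows essentially the same route as the paper: it records the transformation law for the Ricci tensor under the $\mathbb{D}$-homothetic change, specialises at $\xi$ to obtain $\alpha\overline{Q}\xi=Q\xi-(1-\frac{1}{\alpha})(2n(\alpha+1)+trl)\xi$, and then substitutes the hypothesis to conclude $\overline{Q}\overline{\xi}=(tr\overline{l})\overline{\xi}$ with $tr\overline{l}=\frac{1}{\alpha^2}(trl-2n(\alpha^2-1))$, exactly as in the text preceding the proposition. The argument is correct and no further comment is needed.
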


\begin{defn}
A paracontact metric structure $(\varphi,\xi,\eta,g)$ is said to be $\emph{locally $\varphi-$}$ $\emph{symmetric}$ if $\varphi^2(\nabla_WR)(X,Y,Z)=0$, for all vector
fields $W,X,Y,Z$ orthogonal to $\xi$.
\end{defn}

We have the following
\begin{thm}
Let $M^3$ be a paracontact metric manifold with $Q\varphi=\varphi Q$. Then $M^3$ is locally $\varphi-$symmetric if and only if the scalar curvature scal of $M^3$ is constant.
\end{thm}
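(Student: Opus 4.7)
The starting point is the structure already extracted for such manifolds. Since $Q\varphi=\varphi Q$, \propref{p2} provides the $\eta$-Einstein form $QX=aX+b\eta(X)\xi$, and \lemref{l1} gives that $trl=a+b$ is constant; substituting this $Q$ into the three-dimensional identity \eqref{9} yields the curvature expression \eqref{19}
$$R(X,Y)Z=\gamma\bigl(g(Y,Z)X-g(X,Z)Y\bigr)+b\bigl(\eta(Y)\eta(Z)X-\eta(X)\eta(Z)Y+\eta(X)g(Y,Z)\xi-\eta(Y)g(X,Z)\xi\bigr),$$
with $\gamma=\tfrac{scal}{2}-trl$ and $b=\tfrac{3trl-scal}{2}$. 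Because $trl$ is constant, $W\gamma=\tfrac{1}{2}W(scal)$ and $Wb=-\tfrac{1}{2}W(scal)$, so the gradient of the scalar curvature is the only possible source of nontriviality in $\nabla R$.

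Next, I would differentiate \eqref{19} by the Leibniz rule for tensor covariant derivatives, keeping the vector arguments $X,Y,Z$ frozen and using $\nabla g=0$ together with \eqref{3} for $\nabla_{W}\xi$ and the induced formula for $\nabla_{W}\eta$. Specialising to $W,X,Y,Z\in\mathbb{D}$ (so $\eta(X)=\eta(Y)=\eta(Z)=0$) immediately kills every term in which $\eta$ is still applied to a frozen argument; what survives is the $W\gamma$ piece $\tfrac{1}{2}W(scal)(g(Y,Z)X-g(X,Z)Y)$ together with a residue from differentiating $\eta(X)g(Y,Z)\xi-\eta(Y)g(X,Z)\xi$ where $\eta$ has been \emph{differentiated} off $X$ or $Y$, but that residue is a scalar multiple of $\xi$. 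Since $\varphi^{2}$ annihilates $\xi$ and restricts to the identity on $\mathbb{D}$, one obtains the clean identity
$$\varphi^{2}(\nabla_{W}R)(X,Y,Z)=\tfrac{1}{2}W(scal)\bigl(g(Y,Z)X-g(X,Z)Y\bigr),\qquad W,X,Y,Z\in\mathbb{D}.$$

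The equivalence then follows at once. If $scal$ is constant, $W(scal)=0$ and local $\varphi$-symmetry is immediate. Conversely, if $M^{3}$ is locally $\varphi$-symmetric, choose a unit $Y\in\mathbb{D}$, set $Z=Y$, and take $X\in\mathbb{D}$ orthogonal to $Y$; the identity above forces $W(scal)=0$ for every $W\in\mathbb{D}$, and combined with $\xi(scal)=0$ coming from the interior of the proof of \lemref{l1} (trivially so where $l=0$, by Remark~\ref{r1}) we conclude that $scal$ is constant. The main obstacle is purely organisational: one must track that every $\nabla_{W}\eta$ and $\nabla_{W}\xi$ correction either carries a factor $\eta(X)$, $\eta(Y)$ or $\eta(Z)$, or points in the $\xi$-direction and is killed by $\varphi^{2}$, so that only the scalar-curvature gradient term remains.
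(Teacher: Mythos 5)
Your argument is correct and follows essentially the same route as the paper: substitute the $\eta$-Einstein form into the three-dimensional curvature identity to get \eqref{19}, differentiate, restrict all arguments to $\mathbb{D}$ so that $\varphi^2$ kills the $\xi$-direction residue and leaves only $W(\gamma)(g(Y,Z)X-g(X,Z)Y)=\tfrac12 W(scal)(g(Y,Z)X-g(X,Z)Y)$, then combine with $\xi(scal)=0$ from the proof of Lemma~\ref{l1}. Your handling of the degenerate case $l=0$ via Remark~\ref{r1} matches the paper's as well.
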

\begin{proof}
From the proof of $Lemma~\ref{l1}$, we see that either $l=0$ everywhere (and hence by $Remark~\ref{r1}$, that $M^3$ is a flat) or $trl=const\neq 0$ and in this case all the formulas in $Lemma~\ref{l1}$ are valid.
Differentiating \eqref{19} with respect to $W$ and using $Lemma~\ref{l1}$, we obtain
\begin{equation}\label{25}
(\nabla_WR)(X,Y,Z)=g(Y,Z)(W(b)\eta(X)\xi+b(g(X,\nabla_W\xi)\xi+\eta(X)\nabla_W\xi))-
\end{equation}
$$-g(X,Z)(W(b)\eta(Y)\xi+b(g(Y,\nabla_W\xi)\xi+\eta(Y)\nabla_W\xi))+$$
$$+(W(\gamma)g(\varphi^2Y,Z)+bg(g(Y,\nabla_W\xi)\xi+\eta(Y)\nabla_W\xi,Z))X-$$
$$-(W(\gamma)g(\varphi^2X,Z)+bg(g(X,\nabla_W\xi)\xi+\eta(X)\nabla_W\xi,Z))Y.$$
Taking $W,X,Y,Z$ orthogonal to $\xi$ and using $\varphi\xi=0$, we get the from \eqref{23}
$$2\varphi^2(\nabla_WR)(X,Y,Z)=W(scal)(g(Y,Z)X-g(X,Z)Y).$$
The rest of the proof follows immediately from this and $\xi(scal)$ (again see the proof of $Lemma~\ref{l1}$).
\end{proof}

\section*{Acknowledgments}

S.Z. is partially supported by Contract DFNI I02/4/12.12.2014 and Contract 80-10-33/2017 with the Sofia University "St.Kl.Ohridski".

\end{document}